\newcommand{\B}[1]{\mathbf {#1}}
\newcommand{\C}[1]{\mathcal {#1}}
\newcommand{\F}[1]{\mathfrak {#1}}
\newtheorem{theorem}[equation]{Theorem}
\newtheorem{corollary}[equation]{Corollary}
\newtheorem{lemma}[equation]{Lemma}
\newtheorem{proposition}[equation]{Proposition}
\theoremstyle{definition}
\newtheorem{example}[equation]{Example}
\theoremstyle{remark}
\newtheorem{remark}[equation]{Remark}
\newtheorem{question}[equation]{Question}
\numberwithin{equation}{section}
\numberwithin{figure}{section}
\numberwithin{table}{section}
\newcommand\OP{\operatorname}
\newcommand\Ham{\OP{Ham}}
\newcommand\Diff{\OP{Diff}}
\begin{document}

\title{On bi-invariant word metrics}
\author{\'Swiatos\l aw R. Gal}
\address{\'SRG --- Uniwersytet  Wroc\l awski \& Universit\"at Wien}
\email{sgal@math.uni.wroc.pl}
\author{Jarek K\k edra}
\address{JK --- University of Aberdeen \& Uniwersytet Szczeci\'snski}
\email{kedra@abdn.ac.uk}

\begin{abstract}
We prove that bi-invariant word metrics are bounded
on certain Chevalley groups. As an application we provide
restrictions on Hamiltonian actions of such groups.
\end{abstract}

\maketitle


\section{Introduction}\label{S:intro}

\subsection{The result}\label{SS:result}
Let $\C O_{\sf V}\subset \F K$ be a ring of $\sf V$-integers in a number
field~$\F {K}$, where $\sf V$ is a set of valuations containing all 
Archimedean ones. Let $G_{\pi}(\Phi,\C O_{\sf V})$ be the Chevalley
group associated with a faithful representation
$\pi\colon \F g\to \F{gl}(V)$ of a simple
complex Lie algebra of rank at least two
(see Section \ref{S:chevalley} for details).

\begin{theorem}\label{T:main}
Let\/ $\Gamma$ be a finite extension or a supergroup of finite index of
the Chevalley group $G_{\pi}(\Phi,\C O_{\sf V})$. Then any bi-invariant
metric on $\Gamma$ is bounded.
\end{theorem}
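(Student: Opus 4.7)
The plan is to reduce to the Chevalley group $G=G_\pi(\Phi,\C O_{\sf V})$ itself and then show that the conjugation-invariant norm $\|g\|:=d(g,e)$ associated to any bi-invariant metric is bounded. The reduction is routine: if $G\leq\Gamma$ has finite index, decompose $\Gamma=\bigsqcup_{i=1}^n\gamma_i G$ and use $\|\gamma_i g\|\leq\|\gamma_i\|+\|g\|$; the finite-extension case reduces similarly, since a finite normal subgroup has bounded $\|\cdot\|$-diameter and the quotient metric is bi-invariant.

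Once inside $G$, I would combine two ingredients. First, I would show that each root subgroup $U_\alpha=\{e_\alpha(t):t\in\C O_{\sf V}\}$ has bounded $\|\cdot\|$-diameter, independently of $t$. The key elementary tool is the commutator estimate $\|[x,y]\|\leq 2\min(\|x\|,\|y\|)$, which follows from bi-invariance. Because the rank of $\F g$ is at least two, every root $\gamma$ lies in some $A_2$, $B_2$, or $G_2$ subsystem, so one can write $\gamma=\alpha+\beta$ with $\alpha,\beta\in\Phi$. In the simply-laced case the Chevalley commutator formula gives $e_\gamma(t)=[e_\alpha(1),e_\beta(\pm t)]$, hence $\|e_\gamma(t)\|\leq 2\|e_\alpha(1)\|$, a constant independent of $t$. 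In the doubly- or triply-laced cases the commutator formula produces additional factors supported on higher roots $i\alpha+j\beta$; these are handled by an induction on root height, repeatedly applying the same commutator bound so that the extra terms contribute only constants.

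Second, I would appeal to bounded elementary generation of $G$ by its root subgroups: the existence of a constant $N=N(\Phi,\C O_{\sf V})$ such that every element of $G$ is a product of at most $N$ elements drawn from $\bigcup_\alpha U_\alpha$. For Chevalley groups of rank $\geq 2$ over rings of $\sf V$-integers this is classical, due to Carter--Keller, Liehl, Tavgen, Morris, and others. Combining with the first ingredient yields $\|g\|\leq NC$ for all $g\in G$, and the theorem follows. The main obstacle is the bounded-generation theorem itself, a deep external input that I would cite rather than reprove; a secondary technical point is the inductive bookkeeping for the non-simply-laced case, together with the mild issue of distinguishing the full Chevalley group $G_\pi(\Phi,\C O_{\sf V})$ from its elementary subgroup, which under the standing assumptions agree up to finite index and can therefore be absorbed into the initial reduction.
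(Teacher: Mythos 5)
Your proposal is correct and follows essentially the same route as the paper: bound each root subgroup by a constant using the Chevalley commutator formula inside a rank-two subsystem, invoke Tavgen'-type bounded elementary generation, and absorb the finite-extension and finite-index cases into a routine reduction. The only cosmetic difference is that the paper sidesteps your height induction in the non-simply-laced case by choosing the decomposition $\alpha=\beta+\gamma$ so that $\beta+\gamma$ is the only root among the positive integer combinations of $\beta$ and $\gamma$, which makes the commutator formula a single factor; otherwise the two arguments coincide.
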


The examples to which Theorem \ref{T:main} applies include the
following groups. 
\begin{enumerate}
\item
$\OP{SL}(n;\B Z)$; it is a non-uniform lattice in $\OP{SL}(n;\B R)$.
\item
$\OP{SL}\left(n;\B Z\left[\sqrt{2}\right]\right)$; the image of its diagonal
embedding into the product
$\OP{SL}(n;\B R)\times \OP{SL}(n;\B R)$
is a  non-uniform lattice.
\item
$\OP{SO}(n;\B Z) := \left\{A\in \OP{SL}(n,\B Z)\,|\, AJA^T=J\right\},$
where $J$ is the matrix with ones on the anti-diagonal and zeros
elsewhere. It is a non-uniform lattice in the split real
form of $\OP{SO}(n,\B C)$.
\item
$\OP{Sp}(2n;\B Z)$; it is a non-uniform lattice in
$\OP{Sp}(2n;\B R)$. Its nontrivial central extension
by the infinite cyclic group has unbounded bi-invariant
word metric (see Example \ref{E:central}).
\end{enumerate}
The details of these and other examples are presented in 
Section \ref{SS:properties} on page \pageref{SS:properties}.

\subsection{Remarks}
The proof of Theorem \ref{T:main} follows from the boundedness
of the bi-invariant word metric. This is the usual word metric
induced by a set of generators invariant under conjugation.
Such generating sets are in general infinite. However, if
a group $\Gamma$ is generated by conjugates of finitely many
elements then the Lipschitz equivalence class of such metrics
is well defined and it is maximal among all bi-invariant metrics.
In particular, if such a bi-invariant word metric is bounded
then so is any bi-invariant metric.

Thus the proof amounts to showing that the bi-invariant word metric
is bounded on a group $\Gamma$ as in Theorem \ref{T:main}.  It
is a combination of two known facts.  The first is that the group
$G(\C O_{\sf V})$ has bounded generation. This means that there is a
subset $X\subset G(\C O_{\sf V})$ and a number $m\in \B N$ such that
every element $g\in G(\C O_{\sf V})$ is a product of at most $m$
elements from $X$.  The second fact is that the bi-invariant word norm
is bounded on~$X$. The details are presented in Section~\ref{S:chevalley}.

Although the bounded generation is inherited by finite index
subgroups, bi-invariant word metrics do not behave well with this
respect.  For example, $\B Z$ is an index two unbounded subgroup in
the infinite dihedral group that is bounded (Example \ref{E:dihedral}).
In Section \ref{S:below}, we present (mostly well known) tools
used to prove unboundedness of bi-invariant word metrics.

\begin{question}
Suppose that $G$ is semisimple real Lie groups of higher rank and with
finite centre. Is a lattice $\Gamma \subset G$ bounded with respect to
the bi-invariant word metric?
\end{question}

Notice that certain lattices in groups of rank 1 admit 
nontrivial homogeneous quasi-homomorphisms which implies that their
bi-invariant word metrics are unbounded (see Lemma \ref{L:qm_S}).

The commutator length (on a perfect group), and its
generalisation due to Calegari and Zhuang \cite{calegari_zhuang} called
the $W$-length, as well as the torsion length (on a group generated by
torsion elements) \cite{MR2073290} induce bi-invariant
metrics. Although they are all interesting in their own rights our
main motivation for understanding bi-invariant word metrics was
different.

\subsection{A motivation and application}
Let $(M,\omega)$ be a symplectic manifold and let $\Ham(M,\omega)$ denote the group
of {\em compactly supported} Hamiltonian diffeomorphisms of $(M,\omega)$.  It
admits a bi-invariant metric, called the Hofer metric (see Section
\ref{SS:hofer} for definition). This metric is known to have infinite
diameter in many cases and no example of a symplectic manifold of
positive dimension with the Hofer metric of finite diameter is known.  We
would like to understand the algebraic structure of $\Ham(M,\omega)$ in the
sense of the following question.
\begin{question}
What are the finitely generated subgroups of the group
of Hamiltonian diffeomorphisms of a symplectic manifold?
\end{question}

\begin{corollary}\label{C:main}
Let\/ $(M,\omega)$ be a symplectic manifold. Let\/ $\Gamma$ be a finite
extension or a supergroup of finite index of a
Chevalley group $G_{\pi}(\Phi,\C O_{\sf V})$.
Then the image of a homomorphism
$$
\Gamma \to \Ham(M,\omega)
$$ 
lies within a bounded distance from the identity with respect to
the Hofer metric.
\end{corollary}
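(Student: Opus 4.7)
The plan is to deduce the corollary directly from Theorem~\ref{T:main} by exploiting the bi-invariance of the Hofer metric.

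First, I would invoke Theorem~\ref{T:main}: any bi-invariant metric on $\Gamma$ is bounded. As explained in the remarks following the theorem, this is equivalent to the following concrete statement. There is a finite set $\{g_1,\ldots,g_k\}\subset\Gamma$ whose conjugates generate $\Gamma$, together with a constant $D\in\B N$, such that every $g\in\Gamma$ admits a decomposition
$$
g = c_1 c_2 \cdots c_n, \qquad n\le D,
$$
in which each $c_i$ is a $\Gamma$-conjugate of some $g_{j_i}^{\pm 1}$. Indeed, this is just the statement that the bi-invariant word norm associated with the conjugation-invariant generating set $\bigcup_j\{hg_j^{\pm 1}h^{-1}:h\in\Gamma\}$ is bounded, and (as recalled in the remarks) this word norm is maximal among bi-invariant norms up to Lipschitz equivalence.

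Next, I would use that the Hofer norm $\|\cdot\|_H$ on $\Ham(M,\omega)$ is bi-invariant, so in particular $\|hfh^{-1}\|_H=\|f\|_H$ for every $f,h\in\Ham(M,\omega)$. Given a homomorphism $\phi\colon\Gamma\to\Ham(M,\omega)$, set
$$
C := \max_{1\le j\le k}\|\phi(g_j)\|_H,
$$
which is finite since each $\phi(g_j)$ is a single Hamiltonian diffeomorphism and $k$ is finite.

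Finally, applying $\phi$ to the decomposition $g=c_1\cdots c_n$ and combining the triangle inequality with bi-invariance yields
$$
\|\phi(g)\|_H \;\le\; \sum_{i=1}^n\|\phi(c_i)\|_H \;=\; \sum_{i=1}^n\|\phi(g_{j_i})\|_H \;\le\; D\cdot C,
$$
uniformly in $g\in\Gamma$. Hence $\phi(\Gamma)$ lies within Hofer distance $DC$ of the identity, which is precisely the conclusion of Corollary~\ref{C:main}. There is no substantive obstacle in this argument; the only point requiring care is the translation between the bounded-diameter conclusion of Theorem~\ref{T:main} and the bounded-product-of-conjugates formulation above, but this is exactly what the remarks paragraph following Theorem~\ref{T:main} establishes.
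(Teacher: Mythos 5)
Your argument is correct and is essentially the paper's own proof: the paper cites Lemma~\ref{L:lipschitz} (homomorphisms into groups with a bi-invariant norm are Lipschitz for the bi-invariant word metric) and combines it with Theorem~\ref{T:main}, whereas you simply unfold the proof of that lemma inline --- writing $g$ as a bounded product of conjugates of generators and using bi-invariance of the Hofer norm plus the triangle inequality. No substantive difference.
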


\begin{proof}
A homomorphism $\varphi\colon \Gamma \to \Ham(M,\omega)$ is Lipschitz with
respect to the bi-invariant word metric on $\Gamma$ and the Hofer
metric on the group of Hamiltonian diffeomorphism (see Lemma
\ref{L:lipschitz}). Since the bi-invariant word metric is bounded on
$\Gamma$ according to Theorem~\ref{T:main}, the image of $\varphi$ is
bounded in $\Ham(M,\omega)$.
\end{proof}

\subsection{Remarks}\label{SS:remarks}
There are examples of nontrivial Hamiltonian actions of arithmetic
lattices on closed symplectic manifolds (see Example
\ref{E:nontrivial_action}). In all examples known to us, such an
action factors through a compact group.

The well known result of Polterovich \cite{MR2003i:53126} states that
there are no nontrivial Hamiltonian actions of certain lattices on
symplectically hyperbolic manifolds \cite{MR2547825}.


\begin{question}
What are the bounded (and finitely generated) subgroups of $\Ham(M,\omega)$?
\end{question}

\begin{question}
Is there a closed symplectic manifold $(M,\omega)$ and a lattice $\Gamma$ in
a semisimple Lie group such that there exists a homomorphism $\Gamma
\to \Ham(M,\omega)$ which does not factor through a compact group?
\end{question}

\section{Preliminaries on bi-invariant word metrics}
\label{S:basic}
\subsection{The word metric}\label{SS:word}
Let $\Gamma$ be a group generated by a set $S\subset \Gamma$.
The {\bf word norm} on $\Gamma$ with respect to $S$ is defined
by
$$
|g|_S = 
\min\{k\in \B N\,|\,g=s_{i_1}\ldots s_{i_k}, \text{ where } s_i\in S\}.
$$
Suppose that $S=S^{-1}$.
It is a standard fact that the above function satisfies the
following properties for all elements $g,h\in \Gamma$.
\begin{enumerate}
\item
$|g|_S \geq 0$
\item
$|g|_S = 0$ if and only if $g=\OP{1}$
\item
$|gh|_S\leq |g|_S+|h|_S$
\item
$|g^{-1}|_S=|g|_S$
\end{enumerate}
Such a norm defines a right-invariant metric $d_S$ on $\Gamma$ by
$$
d_S(g,h)=|gh^{-1}|_S.
$$ 
The metric is called the {\bf word metric} associated with the
generating set $S$. The geometry of such metrics for finitely
generated groups has been a subject of extensive research during the
last few decades originating in Gromov \cite{MR1253544}.

\subsection{Bi-invariant word metrics}\label{SS:biword}
If the generating set $S$ is invariant under the conjugation then so
is the norm.  That is, for all $g,h\in \Gamma$ we have
$$
|hgh^{-1}|_S=|g|_S.
$$
The induced metric is then bi-invariant.  Let $S\subset \Gamma$ be a
subset normally generating $\Gamma$. This means that $\Gamma$ is
generated by
$$
\overline S:=\bigcup_{g\in \Gamma} gSg^{-1}.
$$
The set $\overline S$ is invariant under the conjugation. 
If $S$ is a generating set then, since
$S\subset \overline S$, we have
$$
|g|_{\overline S}\leq |g|_S
$$
for every $g\in \Gamma$. If $\Gamma$ is Abelian then the two
norms coincide. In general, the bi-invariant norm is strictly
smaller on some elements. 

\begin{remark}
The basic properties presented in this section are elementary and can
be found in the paper of Burago, Ivanov and Polterovich
\cite{MR2509711}, where they investigate bi-invariant metrics on
groups of diffeomorphisms of manifolds.

In general, not much is known for bi-invariant word metrics with an
exception for the commutator length (see Section \ref{SS:scl}). 
\end{remark}

\begin{example}\label{E:f2}
Let $\Gamma=\OP{F}_2$ be a free group generated by two elements.
Let $S=\{x,x^{-1},y,y^{-1}\}$. Observe that
$$
|y^nxy^{-n}|_{\overline S }=1 \text{ and } |y^nxy^{-n}|_S=2n+1.
$$
\hfill $\diamondsuit$
\end{example}

\begin{example}\label{E:conjugacy}
If $\Gamma$ has finitely many conjugacy classes then
any bi-invariant metric is bounded. \hfill $\diamondsuit$
\end{example}

\subsection{The Lipschitz property}\label{SS:lipschitz}

A group $G$ is {\bf normally finitely generated} if there exists a
finite set $S\subset G$, such that $G$ is generated by all the
conjugates of elements of $S$.

\begin{lemma}\label{L:lipschitz}
Let\/ $\Gamma$ be a group normally generated by a finite set\/ $S=S^{-1}$.
Let\/ $G$ be a group equipped with a bi-invariant norm $\|\phantom{g}\|$.
A homomorphism $\psi\colon \Gamma \to G$ is Lipschitz. That is,
there exists a positive constant\/ $\mu\in \B R$ such that
$$
\|\psi(g)\|\leq\mu|g|_{\overline S}
$$
for every $g\in \Gamma$.
\end{lemma}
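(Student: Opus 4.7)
The plan is to take the Lipschitz constant to be the largest norm of an image of a generator in $S$, and use bi-invariance of $\|\cdot\|$ to transfer this bound from $S$ to its full conjugacy closure $\overline S$.

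First I would set $\mu := \max_{s\in S}\|\psi(s)\|$, which is a finite real number because $S$ is finite. Then, for any $s\in S$ and any $h\in\Gamma$, the homomorphism property gives $\psi(hsh^{-1})=\psi(h)\psi(s)\psi(h)^{-1}$, and bi-invariance of $\|\cdot\|$ on $G$ yields
$$
\|\psi(hsh^{-1})\| = \|\psi(h)\psi(s)\psi(h)^{-1}\| = \|\psi(s)\| \leq \mu.
$$
Thus every element of $\overline S$ is sent by $\psi$ to an element of norm at most $\mu$.

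To finish, given $g\in\Gamma$ with $|g|_{\overline S}=k$, write $g=\overline s_1\cdots\overline s_k$ with each $\overline s_i\in\overline S$. Applying $\psi$ and the triangle inequality for $\|\cdot\|$ gives $\|\psi(g)\|\leq\sum_{i=1}^k\|\psi(\overline s_i)\|\leq k\mu = \mu|g|_{\overline S}$, as required.

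There is no real obstacle here; the argument is a direct unwinding of the definitions. The only point to flag is that the uniform bound on the images of elements of $\overline S$ genuinely requires bi-invariance (equivalently, conjugation invariance) of the target norm $\|\cdot\|$, which is precisely why the statement is phrased for bi-invariant norms on $G$ rather than arbitrary norms.
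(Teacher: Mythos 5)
Your proposal is correct and follows exactly the paper's argument: set $\mu=\max_{s\in S}\|\psi(s)\|$, observe via bi-invariance that every element of $\overline S$ maps to an element of norm at most $\mu$, and conclude by the triangle inequality applied to a minimal-length decomposition of $g$ in $\overline S$. No differences worth noting.
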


\begin{proof}
Let $\mu:=\max\{\|\psi(s)\|\,|\,s\in S\}$ and let
$g=s_{i_1}^{h_1}\ldots s_{i_k}^{h_k}$ be a word of minimal
length showing that  $|g|_{\overline S}=k$. Then
\begin{eqnarray*}
\|\psi(g)\|&=& \left\|\psi(s_{i_1})^{\psi(h_1)}\ldots \psi(s_{i_k})^{\psi(h_k)}\right\|\\
&\leq& \sum_{j=1}^k\|\psi(s_{i_j})\|\\
&\leq& \mu k = \mu|g|_{\overline S}
\end{eqnarray*}
\end{proof}

\begin{example}\label{E:simple}
If $G$ is a simple group then it is normally generated
by $\{g,g^{-1}\}$ for any $g\neq \OP{Id}$. Let us apply this
to the group of Hamiltonian diffeomorphisms of a
closed symplectic manifold. Let $\OP{Id}\neq g\in \Ham(M,\omega)$
and let $S=\{g,g^{-1}\}$.
Then
$$
|f|_{\overline S} \geq \frac{1}{\|g\|}\|f\|
$$
for every $f\in \Ham(M,\omega)$ where $\|f\|$ denotes the Hofer norm.  
\hfill $\diamondsuit$
\end{example}

The Lipschitz equivalence class of a bi-invariant word metric on a
normally finitely generated group is well defined.  And this class is
maximal in the sense that any other bi-invariant metric is Lipschitz
with respect to it. More precisely, the identity from the word metric
to any other bi-invariant metric is Lipschitz.  In particular, a
normally finitely generated group $G$ admits an unbounded bi-invariant
metric if and only if the bi-invariant word metric is unbounded.

\subsection{Convention}\label{SS:convention}
In what follows, we shall frequently abuse terminology
by saying {\bf the bi-invariant word metric} having in mind
the Lipschitz equivalence class of such metrics. The notation
$d_{\Gamma}$ will mean $d_{\overline S}$ for some finite
generating set $S\subset \Gamma$.

\begin{corollary}\label{C:h1}
Let\/ $\Gamma$ be a group normally generated by a finite set.  If the
associated bi-invariant word metric is bounded then every quotient
of $\Gamma$ is bounded. In particular,
the abelianisation $\Gamma/[\Gamma,\Gamma]$ is finite. \qed
\end{corollary}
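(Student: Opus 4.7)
The plan is to transport the bi-invariant word metric along the quotient map and then invoke Lemma~\ref{L:lipschitz}. Let $\Gamma$ be normally generated by a finite symmetric set $S$, and let $\pi\colon \Gamma\to Q$ be any surjective homomorphism. First I would observe that $\pi(S)$ normally generates $Q$: indeed, every element of $Q$ lifts to a product of conjugates of elements of $S$, and $\pi$ sends conjugates of elements of $S$ to conjugates of elements of $\pi(S)$. Hence $Q$ itself is normally finitely generated and carries a well-defined Lipschitz class of bi-invariant word metrics $d_Q=d_{\overline{\pi(S)}}$.

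The next step is to apply Lemma~\ref{L:lipschitz} to $\psi=\pi$, viewing $Q$ as a group equipped with its own bi-invariant word norm $|\,\cdot\,|_{\overline{\pi(S)}}$ for which each generator $\pi(s)$ has norm at most $1$. The lemma then gives
\[
|\pi(g)|_{\overline{\pi(S)}}\leq |g|_{\overline S}
\]
for every $g\in\Gamma$, so $\pi$ is $1$-Lipschitz. If $d_{\overline S}$ is bounded on $\Gamma$, then its image under $\pi$, which is all of $Q$, is bounded with respect to $d_Q$. By the maximality property of the bi-invariant word metric discussed in Section~\ref{SS:lipschitz}, every bi-invariant metric on $Q$ is then bounded as well.

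For the final assertion, I specialise to the quotient $Q=\Gamma/[\Gamma,\Gamma]$. Since $Q$ is abelian, normal generation by $\pi(S)$ coincides with ordinary generation, so $Q$ is a finitely generated abelian group; moreover, on an abelian group the bi-invariant word metric coincides with the ordinary word metric (as noted in Section~\ref{SS:biword}). The balls of finite radius in any word metric on a finitely generated group are finite, so boundedness of $d_Q$ forces $Q$ to be finite. The only step that requires any care is checking that $\pi(S)$ normally generates $Q$, and this is immediate from surjectivity; everything else follows directly from Lemma~\ref{L:lipschitz} and the discussion of the maximality of bi-invariant word metrics.
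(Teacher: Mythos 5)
Your proof is correct and follows exactly the route the paper intends (the corollary is stated with a \qed precisely because it is immediate from Lemma~\ref{L:lipschitz} and the remarks in Section~\ref{SS:lipschitz}): the quotient map is Lipschitz, so a bounded group has bounded quotients, and a finitely generated abelian group with bounded word metric is finite. All the details you fill in — that $\pi(S)$ normally generates the quotient, and that for an abelian quotient the bi-invariant and ordinary word metrics coincide — are accurate.
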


\begin{example}\label{E:dihedral}
Let $\Gamma = \B Z/2 \star \B Z/2$ be the infinite dihedral group. It
is direct calculation that the bi-invariant word metric is bounded by
$2$.  On the other hand, $\Gamma$ contains an infinite cyclic subgroup
of index two which has, of course, unbounded bi-invariant metric. This
shows that the inclusion of a finite index (normal) subgroup is not
Lipschitz.  \hfill $\diamondsuit$
\end{example}

\begin{corollary}\label{C:section}
Let\/ $\Gamma $ be a finitely generated group and
let\/ $\pi\colon \Gamma \to \Delta$ be a surjective homomorphism.
If $s\colon \Delta \to \Gamma$ is a left inverse of $\pi$, that
is, $s\circ\pi = \OP{Id}_{\Delta}$, then 
the bi-invariant word metric on $\Delta$ induced from $\Gamma$
is equivalent to the bi-invariant word metric $d_{\Delta}$.
\qed
\end{corollary}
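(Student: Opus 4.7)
The plan is to interpret the phrase ``bi-invariant word metric on $\Delta$ induced from $\Gamma$'' as the pullback along the section: fix a finite normal generating set $S\subset\Gamma$ and equip $\Delta$ with the norm $|\delta|_{s}:=|s(\delta)|_{\overline S}$. One then needs two-sided Lipschitz bounds between $|\cdot|_s$ and the intrinsic norm $|\cdot|_{\overline{S_\Delta}}$ with respect to some (hence any) finite normal generating set $S_\Delta$ of~$\Delta$.

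A preliminary observation is that $\Delta$ is itself normally finitely generated: since $\pi$ is surjective and $\Gamma$ is finitely generated, $\pi(S)$ is a finite generating set of $\Delta$, so the Lipschitz equivalence class $d_\Delta$ is well defined in the sense of the convention in Section~\ref{SS:convention}. This legitimises invoking Lemma~\ref{L:lipschitz} with either of $\Gamma$ or $\Delta$ playing the role of the normally finitely generated source.

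The proof then consists of two symmetric applications of Lemma~\ref{L:lipschitz}. First, apply it to $s\colon \Delta\to\Gamma$, where $\Gamma$ is equipped with the bi-invariant norm $|\cdot|_{\overline S}$; this yields a constant $\mu_1>0$ with
$$
|s(\delta)|_{\overline S}\leq \mu_1 |\delta|_{\overline{S_\Delta}}\qquad\text{for all }\delta\in\Delta,
$$
that is, $|\delta|_s\leq \mu_1|\delta|_{\overline{S_\Delta}}$. Second, apply it to $\pi\colon \Gamma\to\Delta$, where $\Delta$ carries the bi-invariant norm $|\cdot|_{\overline{S_\Delta}}$; this produces $\mu_2>0$ with $|\pi(g)|_{\overline{S_\Delta}}\leq \mu_2 |g|_{\overline S}$ for every $g\in\Gamma$. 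Specialising to $g=s(\delta)$ and using $\pi\circ s=\OP{Id}_\Delta$ gives
$$
|\delta|_{\overline{S_\Delta}}=|\pi(s(\delta))|_{\overline{S_\Delta}}\leq \mu_2 |s(\delta)|_{\overline S}=\mu_2|\delta|_s.
$$
Combining the two inequalities delivers the Lipschitz equivalence.

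There is no real obstacle: the content of the corollary is essentially a bookkeeping consequence of Lemma~\ref{L:lipschitz}. The only point that needs a moment's attention is that both source groups are normally finitely generated, and this is immediate from the hypotheses. A mildly finicky aspect, which I would address by a single sentence, is the independence of the resulting metric from the choice of $S$ and $S_\Delta$, which again follows from Lemma~\ref{L:lipschitz} applied to the identity map between two bi-invariant word metrics on the same group.
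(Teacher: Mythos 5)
Your argument is correct and is exactly the intended one: the paper leaves this corollary unproved (it is stated with a \qed) as an immediate consequence of Lemma~\ref{L:lipschitz}, applied once to the homomorphism $s$ and once to $\pi$, precisely as you do. Your reading of the (mistyped) hypothesis as $\pi\circ s=\OP{Id}_\Delta$, i.e.\ that $s$ is a group-theoretic section, and your interpretation of the induced metric as the pullback along $s$ are both the correct ones.
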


\begin{corollary}\label{C:quotient}
If\/ $F\to \Gamma \to \Delta$ is a split extension with finite
kernel then the quotient map is a bi-Lipschitz equivalence.
\qed
\end{corollary}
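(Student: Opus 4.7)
The plan is to chain Corollary~\ref{C:section} with the trivial observation that any finite subset of a normally finitely generated group is bounded in the bi-invariant word metric. The splitting of the extension provides a homomorphism $s\colon \Delta \to \Gamma$ with $\pi\circ s = \OP{Id}_\Delta$, and this will be used both to lift $\Delta$ into $\Gamma$ and to decompose arbitrary elements of $\Gamma$.

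First I would apply Lemma~\ref{L:lipschitz} to the quotient homomorphism $\pi\colon \Gamma\to\Delta$ to obtain one of the two Lipschitz bounds, namely $|\pi(g)|_\Delta \leq C_1|g|_\Gamma$. For the reverse direction, Corollary~\ref{C:section} (or, equivalently, a second application of Lemma~\ref{L:lipschitz} to $s$ itself) gives $|s(\delta)|_\Gamma \leq C_2|\delta|_\Delta$. Now every $g\in\Gamma$ factors uniquely as $g = k_g\cdot s(\pi(g))$ with $k_g := g\cdot s(\pi(g))^{-1}\in F$, and since $F$ is finite the quantity $M := \max_{f\in F}|f|_\Gamma$ is finite. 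The triangle inequality together with bi-invariance gives
\[
|g|_\Gamma \leq |k_g|_\Gamma + |s(\pi(g))|_\Gamma \leq M + C_2|\pi(g)|_\Delta.
\]
Applied to $gh^{-1}$ this yields $d_\Gamma(g,h) \leq 2M + C_2\,d_\Delta(\pi(g),\pi(h))$, which combined with the first inequality produces the claimed bi-Lipschitz equivalence in the sense of Section~\ref{SS:convention}: the fibres of $\pi$ have diameter at most $M$, and the two metrics are comparable up to this bounded additive error.

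I do not expect a genuine obstacle here, since the argument is a formal consequence of Lemma~\ref{L:lipschitz} applied to both $\pi$ and $s$ together with the finiteness of $F$. The one point that deserves a sentence of comment is the additive constant $M$, which is inherent in any quotient with nontrivial kernel; it is precisely the hypothesis that $F$ is finite (so $M<\infty$) that lets this additive error be absorbed into the Lipschitz equivalence class of bi-invariant word metrics.
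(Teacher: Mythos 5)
Your argument is correct and is essentially the proof the paper intends: one direction is the Lipschitz property of the quotient homomorphism (Lemma~\ref{L:lipschitz}), and the other comes from applying Lemma~\ref{L:lipschitz} to the splitting $s$ (i.e.\ Corollary~\ref{C:section}) together with the decomposition $g = k_g\, s(\pi(g))$ and the boundedness of the finite kernel, just as in the proof of Proposition~\ref{P:central_finite}. Your closing remark is also the right reading of the statement: since $\pi$ is not injective, ``bi-Lipschitz equivalence'' can only mean equivalence up to the bounded additive error coming from the diameter of $F$, in the coarse sense of Section~\ref{SS:convention}.
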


\section{When a bi-invariant metric is unbounded?}
\label{S:below}
Most of the material presented in this section is known and
standard, except possibly for the part about extensions.

\subsection{Distortion \cite[Section 3]{MR1253544}}
\label{SS:distortion}\hfill

Let $G$ be a group equipped with a norm $\|\phantom{g}\|$.
The {\bf translation length} of an element $g\in G$
is defined by
$$
\tau(g):=\lim_{n\to \infty}\frac{\|g^n\|}{n}.
$$

\begin{lemma}\label{L:tau}
For every $g\in G$ we have
$\tau(g)=\inf_{n}\frac{\|g^n\|}{n}$.
\end{lemma}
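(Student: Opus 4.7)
The plan is to recognize that the sequence $a_n := \|g^n\|$ is subadditive and then apply Fekete's lemma. Subadditivity is immediate from the triangle inequality for the norm:
\begin{equation*}
a_{m+n} = \|g^{m+n}\| = \|g^m g^n\| \leq \|g^m\| + \|g^n\| = a_m + a_n.
\end{equation*}
Combined with the fact that $a_n \geq 0$ for all $n$, this puts us exactly in the setting of Fekete's subadditive lemma, whose conclusion is precisely the desired equality $\lim_n a_n/n = \inf_n a_n/n$.

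Since the paper is written in a self-contained style, I would include the short proof of Fekete's lemma rather than cite it. Fix an arbitrary $m \in \B N$ and write an arbitrary large $n$ via the division algorithm as $n = qm + r$ with $0 \leq r < m$. Iterated subadditivity gives
\begin{equation*}
a_n = a_{qm+r} \leq q\, a_m + a_r \leq q\, a_m + C_m,
\end{equation*}
where $C_m := \max_{0 \leq r < m} a_r$ is a constant depending only on $m$. Dividing by $n$ yields
\begin{equation*}
\frac{a_n}{n} \leq \frac{qm}{n}\cdot\frac{a_m}{m} + \frac{C_m}{n}.
\end{equation*}

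As $n \to \infty$ with $m$ fixed, $qm/n \to 1$ and $C_m/n \to 0$, so $\limsup_{n\to\infty} a_n/n \leq a_m/m$. Because $m$ was arbitrary, we get $\limsup_{n\to\infty} a_n/n \leq \inf_m a_m/m$. The reverse inequality $\inf_m a_m/m \leq \liminf_{n\to\infty} a_n/n$ is trivial, so the limit exists and equals the infimum. There is no real obstacle here; the only mild point of care is handling the remainder term $a_r$ uniformly (hence introducing the constant $C_m$), which is the standard trick that makes Fekete's argument go through.
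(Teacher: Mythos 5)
Your proof is correct and takes essentially the same route as the paper: the paper also establishes subadditivity of $n\mapsto\|g^n\|$ and reproves Fekete's lemma inline via the division algorithm. The only cosmetic difference is that the paper bounds the remainder term by $m\|g\|$ and chooses $n$ explicitly in terms of $\epsilon$, whereas you bound it by the constant $C_m$ and pass to a $\limsup$.
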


\begin{proof}
First, observe that $0\leq \frac{\|g^n\|}{n}\leq \|g\|$ and hence the
above infimum exists. Let us denote this infimum by $\mu$. Let
$\epsilon>0$ and let $m\in \B N$ be such that
$$
\frac{\|g^m\|}{m}< \mu+\epsilon.
$$
Choose an $n>\frac{m\|g\|}{\epsilon}$ and write it as $n=am+b$, for
some $a,b\in \B N$ with $b<m$. Then we have
$$
\|g^n\| =\|g^{am+b}\|\leq \|g^{am}\|+\|g^b\|\leq a\|g^m\|+\|g^b\|.
$$
It follows that
$$
\frac{\|g^n\|}{n}\leq \frac{\|g^m\|}{m}+\epsilon < \mu+2\epsilon.
$$
Since $\epsilon$ is arbitrary, this shows that
$$
\lim_{n\to\infty}\frac{\|g^n\|}{n}\leq  \mu
$$
which finishes the proof.
\end{proof}

An element $g\in G$ is called {\bf distorted} with respect
to the norm $\|\phantom{g}\|$ if its translation length is
equal to zero and {\bf undistorted} otherwise.

\begin{lemma}
Let\/ $\psi\colon \Gamma\to G$ be a homomorphism. If $\psi(g)$ is
undistorted then so is $g$.
\qed
\end{lemma}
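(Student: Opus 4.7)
The plan is to deduce this from the Lipschitz property of $\psi$ established in Lemma \ref{L:lipschitz}. Recall that the translation length $\tau(g)=\lim_{n\to\infty}\|g^n\|/n$ encodes the asymptotic norm growth of powers. Since $\psi$ is a homomorphism, $\psi(g^n)=\psi(g)^n$, so the norm of $\psi(g)^n$ in $G$ is controlled by the word norm of $g^n$ in $\Gamma$.

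More precisely, I would first apply Lemma \ref{L:lipschitz} (assuming $\Gamma$ is normally finitely generated, so that the word norm $|\cdot|_{\overline S}$ has its standard meaning) to obtain a constant $\mu>0$ with
$$
\|\psi(g^n)\|\leq \mu\,|g^n|_{\overline S}
$$
for every $n\in\B N$. Dividing by $n$ and taking the limit $n\to\infty$, the left-hand side tends to $\tau(\psi(g))$ and the right-hand side tends to $\mu\,\tau(g)$. Thus
$$
\tau(\psi(g))\leq \mu\,\tau(g).
$$
Contrapositively, if $\tau(g)=0$ then $\tau(\psi(g))=0$; equivalently, if $\psi(g)$ is undistorted in $G$, then $g$ is undistorted in $\Gamma$.

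There is no real obstacle here: the statement is essentially the functoriality of translation length under Lipschitz maps, and Lemma \ref{L:lipschitz} already does the work. The only subtle point worth flagging is the hypothesis that $\Gamma$ be normally finitely generated so that the bi-invariant word norm (and hence $\tau$) on $\Gamma$ is meaningful in the Lipschitz equivalence class fixed in Section \ref{SS:convention}.
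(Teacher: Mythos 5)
Your proof is correct and is exactly the argument the paper intends (the paper omits the proof as immediate): the Lipschitz property of Lemma \ref{L:lipschitz} gives $\|\psi(g^n)\|\leq\mu|g^n|_{\overline S}$, and dividing by $n$ and passing to the limit yields $\tau(\psi(g))\leq\mu\,\tau(g)$, from which the contrapositive statement follows. Your remark that $\Gamma$ must be normally finitely generated (and $G$ equipped with a bi-invariant norm) for the Lipschitz lemma to apply is the right caveat and matches the paper's standing conventions.
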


\subsection{The commutator length \cite{MR2527432}}\label{SS:scl}
Let $G$ be a group. The commutator length
$$
\OP{cl}\colon [G,G]\to \B R
$$ 
is defined to be the length of the shortest word expressing $g$ and
consisting of commutators of elements from $G$. Notice that if
$G=[G,G]$ then the commutator length is a bi-invariant norm on $G$.
Thus following observation is direct consequence of
Lemma \ref{L:lipschitz}.

\begin{lemma}\label{L:cl_S}
Let\/ $\Gamma$ be a perfect group generated by a finite set\/ $S=S^{-1}$.
Then there exists a constant\/ $\nu>0$ such that
$$
\OP{cl}(g)\leq \nu|g|_{\overline S}
$$
for every $g\in \Gamma$. In other words, the bi-invariant word norm
is Lipschitz with respect to the commutator length. \qed
\end{lemma}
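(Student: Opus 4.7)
The plan is to observe that the commutator length $\OP{cl}$ is itself a bi-invariant norm on the perfect group $\Gamma$, and then to apply Lemma~\ref{L:lipschitz} to the identity homomorphism $\OP{Id}\colon \Gamma \to \Gamma$, viewing the target copy of $\Gamma$ as equipped with $\OP{cl}$.

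First I would verify the three nontrivial norm axioms for $\OP{cl}$. Non-degeneracy is immediate since only the identity admits the empty commutator product. The triangle inequality $\OP{cl}(gh) \leq \OP{cl}(g) + \OP{cl}(h)$ follows from concatenating shortest commutator expressions for $g$ and~$h$. Symmetry under inversion uses $[a,b]^{-1}=[b,a]$, and conjugation invariance uses $h[a,b]h^{-1}=[hah^{-1},hbh^{-1}]$; together these make $\OP{cl}$ a bi-invariant norm. Perfectness of $\Gamma$ is needed precisely to ensure that $\OP{cl}$ is finite on every element, so that this norm is defined on all of~$\Gamma$.

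With this in hand, Lemma~\ref{L:lipschitz} applied to $\psi=\OP{Id}_\Gamma$, with the bi-invariant norm on the target being $\OP{cl}$, produces a constant $\nu > 0$ (explicitly $\nu = \max\{\OP{cl}(s) \mid s \in S\}$, which is finite since $S$ is finite and each $s \in S$ lies in $\Gamma=[\Gamma,\Gamma]$) such that
$$
\OP{cl}(g) \leq \nu\, |g|_{\overline S}
$$
for every $g \in \Gamma$. Note that since $S$ generates $\Gamma$, it a fortiori normally generates~$\Gamma$, so the hypothesis of Lemma~\ref{L:lipschitz} is satisfied.

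There is essentially no serious obstacle here: the only thing to be careful about is that one really does need $\Gamma$ to be perfect in order for $\OP{cl}$ to be defined (and hence to qualify as a bi-invariant norm) on the whole group — otherwise the target of the identity homomorphism would not carry a genuine norm, and Lemma~\ref{L:lipschitz} could not be invoked in the form stated.
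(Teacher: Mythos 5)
Your proposal is correct and follows exactly the paper's route: the authors note that on a perfect group $\OP{cl}$ is a bi-invariant norm and then cite Lemma~\ref{L:lipschitz} as giving the statement immediately, which is precisely your application of that lemma to $\OP{Id}_\Gamma$ with $\nu=\max\{\OP{cl}(s)\mid s\in S\}$. Your explicit verification of the norm axioms and of the role of perfectness just fills in details the paper leaves implicit.
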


The {\bf stable commutator length} of an element $g\in G$ is
defined as the translation length with respect to the commutator
length. That is,
$$
\OP{scl}(g):=\lim_{n\to \infty}\frac{\OP{cl}(g^n)}{n}.
$$
The previous lemma has an immediate corollary.
\begin{corollary}\label{C:scl}
Let $\tau_{\overline S}$ denote the translation length of
the $\overline S$-word norm on a perfect group $\Gamma$ generated
by a finite set $S$. Then
$$
\OP{scl}(g)\leq \nu\tau_{\overline S}(g)
$$
for every $g\in \Gamma$ and the constant $\nu$ from Lemma \ref{L:cl_S}.
\qed
\end{corollary}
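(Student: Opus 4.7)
The plan is to deduce this directly from Lemma \ref{L:cl_S} by taking translation lengths on both sides of the Lipschitz inequality. Since the statement is essentially a routine observation, there is no real obstacle — the main thing to verify is that the inequality is preserved under passing to the limit defining $\OP{scl}$ and $\tau_{\overline S}$.

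First, I would apply Lemma \ref{L:cl_S} with $g$ replaced by $g^n$ for each $n\in \B N$, obtaining
$$
\OP{cl}(g^n) \leq \nu\,|g^n|_{\overline S}.
$$
Dividing both sides by $n$ gives
$$
\frac{\OP{cl}(g^n)}{n} \leq \nu\,\frac{|g^n|_{\overline S}}{n}.
$$

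Next, I would take the limit as $n\to\infty$. By definition,
$$
\OP{scl}(g)=\lim_{n\to\infty}\frac{\OP{cl}(g^n)}{n},
\qquad
\tau_{\overline S}(g)=\lim_{n\to\infty}\frac{|g^n|_{\overline S}}{n},
$$
and both limits exist (for $\tau_{\overline S}$ this is guaranteed by Lemma \ref{L:tau} applied to the bi-invariant word norm, and for $\OP{scl}$ by the same argument applied to the commutator length viewed as a bi-invariant norm on the perfect group $\Gamma$). Passing to limits preserves the weak inequality, yielding $\OP{scl}(g)\leq \nu\,\tau_{\overline S}(g)$ for every $g\in\Gamma$, which is the claim.
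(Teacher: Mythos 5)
Your proposal is correct and is exactly the argument the paper intends: apply Lemma \ref{L:cl_S} to $g^n$, divide by $n$, and pass to the limit (the paper simply records this as an immediate corollary). Nothing further is needed.
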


\subsection{Quasi-homomorphisms \cite{MR2026941}}\label{SS:qm}

Let $G$ be a group. A {\bf quasi-ho\-mo\-mor\-phism} 
$$
q\colon G\to \B R
$$ 
is a function such that there exist a constant $D\geq 0$ (called the
{\bf defect of $q$}) such that
$$
|q(g)-q(gh)+q(h)|\leq D
$$
for every $g,h\in G$. A quasi-homomorphism is called {\bf homogeneous}
if 
$$
q(g^n) = nq(g)
$$
for every $g\in G$ and every $n\in \B Z$. 
If $q\colon G\to \B R$ is a quasi-homomorphism with defect
$D$ then the formula
$$
\widehat q(g):=\lim_{n\to \infty}\frac{q(g^n)}{n}
$$
defines a homogeneous quasi-homomorphism and we have
$$
|q(g)-\widehat q(g)|\leq D
$$ 
for all $g\in G$. Thus if $q$ is unbounded then so is its homogenisation
and if $q$ is bounded then its homogenisation is identically zero.

\begin{lemma}{\cite[Theorem 2.70]{MR2527432}}\label{L:qm}
Let\/ $G$ be a perfect group and let\/ $q\colon G\to \B R$ be
a homogeneous quasi-homomorphism. Then there exists a positive
constant\/ $C>0$ such that
$$
|q(g)|\leq C\OP{scl}(g)
$$
for every $g\in G$.
\end{lemma}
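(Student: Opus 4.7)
The strategy is to show that $C = 2D$ works, where $D$ is the defect of $q$. The argument exploits perfectness of $G$ together with homogeneity in order to bound $|q(g)|$ by the commutator length of large powers of~$g$.

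First I would establish that every homogeneous quasi-homomorphism is a class function, i.e.\ $q(aba^{-1}) = q(b)$ for all $a,b\in G$. Applying the defect inequality twice one obtains $|q(ab^n a^{-1}) - q(b^n)| \leq 2D$ for every $n\in\B N$. Dividing by $n$ and invoking homogeneity of $q$ on both sides yields $|q(aba^{-1}) - q(b)| \leq 2D/n$, and sending $n\to \infty$ gives the desired equality.

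Second, I would deduce the pointwise bound $|q([a,b])| \leq D$ on commutators. Writing $[a,b] = (aba^{-1})\, b^{-1}$, a single application of the defect inequality gives
$$
\left|q([a,b]) - q(aba^{-1}) - q(b^{-1})\right| \leq D.
$$
Since $q$ is a class function, $q(aba^{-1}) = q(b)$, and homogeneity gives $q(b^{-1}) = -q(b)$, so the two cross terms cancel and the bound follows.

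Third, I would combine these ingredients with perfectness. For each $n$, write $g^n = [a_1,b_1]\cdots [a_{k_n},b_{k_n}]$ with $k_n := \OP{cl}(g^n)$; a telescoping application of the defect inequality together with the previous step yields
$$
|q(g^n)| \leq \sum_{i=1}^{k_n}|q([a_i,b_i])| + (k_n-1)D \leq (2k_n - 1)D.
$$
Dividing by $n$ and using $q(g^n) = nq(g)$ one obtains $|q(g)| \leq (2k_n - 1)D/n$; letting $n\to\infty$ gives $|q(g)| \leq 2D\cdot \OP{scl}(g)$, so $C = 2D$ works. The main obstacle is the first step: conjugation invariance is where homogeneity plays its essential role, converting what is a priori only a defect-bounded discrepancy into a genuine equality. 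Once this is in hand, the remaining steps are short formal manipulations with the defect inequality.
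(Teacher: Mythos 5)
Your argument is correct and is essentially the standard proof of this lemma as given in the cited reference: conjugation invariance of homogeneous quasi-homomorphisms, the bound $|q([a,b])|\leq D$ on single commutators, the telescoping estimate $|q(g^n)|\leq (2\OP{cl}(g^n)-1)D$, and homogenisation (the paper itself only cites the result, so there is no in-text proof to diverge from). The only cosmetic point is the degenerate case $D=0$: there $q$ is an honest homomorphism, hence identically zero on the perfect group $G$, so one should take any $C>0$ rather than $C=2D=0$ to match the statement literally.
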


\begin{lemma}\label{L:qm_S}
Let\/ $q\colon \Gamma\to \B R$ be a quasi-homomorphism
of a group generated by a finite set\/ $S$.
Then there exists a constant\/ $C>0$ such that
$$
|q(g)|\leq C|g|_{\overline S}
$$
for every $g\in \Gamma$.
\end{lemma}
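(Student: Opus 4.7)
The plan is to combine two ingredients that follow directly from the quasi-homomorphism axiom: the iterated defect inequality $|q(g_1 \cdots g_k) - \sum_{j=1}^k q(g_j)| \leq (k-1)D$, obtained by $k-1$ successive applications of the definition, together with a uniform bound on $|q|$ over the (in general infinite) set $\overline S$ of all conjugates of elements of $S$. Once both are in place, evaluating $q$ on a minimal-length expression of $g$ in $\overline S$ yields the desired linear estimate.

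The first step I would carry out is to show that the homogenisation $\widehat q$ is conjugation invariant. Applying the defect twice to the product $h \cdot g^n \cdot h^{-1}$ gives
\[
\bigl|q(h g^n h^{-1}) - q(g^n) - q(h) - q(h^{-1})\bigr| \leq 2D,
\]
so dividing by $n$ and letting $n \to \infty$ yields $\widehat q(hgh^{-1}) = \widehat q(g)$. Combined with the universal estimate $|q - \widehat q| \leq D$ recalled just before the lemma, this shows that for every $s \in S$ and every $h \in \Gamma$,
\[
\bigl|q(h s h^{-1})\bigr| \leq \bigl|\widehat q(s)\bigr| + D \leq |q(s)| + 2D \leq M + 2D,
\]
where $M := \max_{s \in S}|q(s)|$, which is finite since $S$ is finite.

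To conclude, fix $g \in \Gamma$ with $g \neq 1$, set $k := |g|_{\overline S}$, and pick a minimal expression $g = s_{i_1}^{h_1}\cdots s_{i_k}^{h_k}$ with $s_{i_j} \in S$. The iterated defect inequality together with the uniform conjugacy bound above gives
\[
|q(g)| \leq \sum_{j=1}^{k} \bigl|q(s_{i_j}^{h_j})\bigr| + (k-1)D \leq k(M + 2D) + (k-1)D \leq (M + 3D)\,|g|_{\overline S},
\]
so $C := M + 3D$ works.

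I do not expect any serious obstacle; the only delicate point is the corner case $g = 1$, where $|g|_{\overline S} = 0$ but a priori $q(1)$ need not vanish. This is harmless: replacing $q$ by the quasi-homomorphism $q - q(1)$ (with the same defect $D$, and differing from $q$ by at most $|q(1)| \leq D$) reduces to the normalised case $q(1) = 0$, and the final constant $C$ need only be enlarged by $D$ to absorb this shift.
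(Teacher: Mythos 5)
Your proof is correct and follows essentially the same route as the paper: write a minimal $\overline S$-expression of $g$ as a product of $k$ conjugates of generators, pay $(k-1)D$ via the iterated defect inequality, and bound $|q|$ uniformly on the conjugates $s^h$, $s\in S$. The only (harmless) variation is that you derive the conjugation bound $|q(s^h)|\le |q(s)|+2D$ from the conjugation invariance of the homogenisation $\widehat q$, whereas the paper asserts it directly from the defect inequality; your route is if anything cleaner, and your remark about the corner case $g=1$ addresses a point the paper leaves implicit.
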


\begin{proof}
Let $\mu:=\max\{|q(s)|\,|\,s\in S\}$.
Let $g\in \Gamma$ be of $\overline S$-length equal to $k$.
That is, $g=s_{i_1}^{h_1}\ldots s_{i_k}^{h_k}$. The following
calculation follows directly from the quasi-homomorphism
property of $q$. 
\begin{eqnarray*}
|q(g)|&=&\left|q\left(s_{i_1}^{h_1}\ldots s_{i_k}^{h_k}\right)\right|\\
&\leq & (k-1)D +\sum_{j=1}^k\left|q\left(s_{i_j}^{h_j}\right)\right|\\
&\leq & (k-1)D +\sum_{j=1}^k(2D+|q(s_{i_j})|)\\
&\leq & kD+2kD+k\mu\\
&\leq & (3D +\mu)|g|_{\overline S}
\end{eqnarray*}
\end{proof}

\begin{corollary}\label{C:qm_S}
If $q\colon \Gamma\to \B R$ is a homogeneous quasi-homomorphism
on a group generated by a finite set\/ $S$ then there exists a constant
$C>0$ such that
$$
|q(g)|\leq C\tau_{\overline S}(g)
$$
for every $g\in \Gamma$. Consequently, if $q(g)\neq 0$ then
$g$ is undistorted with respect to the bi-invariant word metric.
\qed
\end{corollary}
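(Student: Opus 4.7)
The plan is to bootstrap Lemma \ref{L:qm_S} from the word length to the translation length by applying it along powers of $g$ and exploiting homogeneity of $q$.

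First, I would fix $g\in \Gamma$ and apply Lemma \ref{L:qm_S} to each power $g^n$, obtaining
$$
|q(g^n)|\leq C|g^n|_{\overline S}
$$
with $C=3D+\mu$ as in the proof of that lemma (in particular $C$ is independent of $g$). Homogeneity of $q$ gives $q(g^n)=nq(g)$, so dividing by $n$ yields
$$
|q(g)|\leq C\,\frac{|g^n|_{\overline S}}{n}
$$
for every $n\in \B N$. Passing to the infimum (equivalently, the limit, by Lemma \ref{L:tau}) on the right-hand side produces $|q(g)|\leq C\tau_{\overline S}(g)$, which is the desired inequality.

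For the second assertion, if $q(g)\neq 0$ then $\tau_{\overline S}(g)\geq |q(g)|/C>0$, so by definition $g$ is undistorted with respect to the bi-invariant word metric. There is no real obstacle here — the entire argument is a two-line deduction from the already-established Lemma \ref{L:qm_S} together with homogeneity; the only point worth noting is that taking $n\to\infty$ is legitimate because the sequence $|g^n|_{\overline S}/n$ converges to $\tau_{\overline S}(g)$ by Lemma \ref{L:tau} applied to the bi-invariant word norm $|\,\cdot\,|_{\overline S}$.
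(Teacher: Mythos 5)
Your proof is correct and is exactly the intended derivation: the paper states this corollary with no written proof precisely because it follows by applying Lemma \ref{L:qm_S} to powers $g^n$, invoking homogeneity, dividing by $n$, and passing to the limit via Lemma \ref{L:tau}. Nothing to add.
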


\begin{example}
If $\Gamma$ is a hyperbolic group then, due to 
Epstein and Fujiwara \cite{MR1452851} the secound bounded cohomology
group is infinite dimentional, therefore the comparison map
$H^2_b(\Gamma;\B R)\to H^2(\Gamma;\B R)$ has kernel, and as a corollary
there exists nontrivial, thus unbounded quasimorphism on $\Gamma$
Consequently, the bi-invariant word
metric on a non-elementary hyperbolic group
is unbounded.
\hfill $\diamondsuit$
\end{example}

\subsection{Extensions}\label{SS:central}

\begin{proposition}\label{P:central_finite}
Let\/ $K\stackrel{i}\to \widehat \Gamma \stackrel{\pi}\to \Gamma$ be an
extension with bounded kernel~$K$.  Then $\widehat \Gamma$ is bounded
if and only if the group $\Gamma$ is bounded. In particular,
an extension of a bounded group by a finite group is bounded.
\end{proposition}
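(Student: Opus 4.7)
The plan is to treat the two implications of the equivalence separately. The ``only if'' direction is a direct invocation of Corollary~\ref{C:h1}: since $\Gamma\cong\widehat{\Gamma}/i(K)$ is a quotient, boundedness of $\widehat{\Gamma}$ forces boundedness of $\Gamma$. The ``in particular'' clause then follows because every finite group has finitely many conjugacy classes and is therefore bounded by Example~\ref{E:conjugacy}.

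For the substantive direction, suppose $\Gamma$ has bi-invariant diameter at most $M$ with respect to a finite normal generating set $S$, and $K$ has bi-invariant diameter at most $N$ with respect to a finite normal generating set $T$. Choose a set-theoretic lift $\widetilde{S}\subset\widehat{\Gamma}$ of $S$ and identify $T$ with a subset of $\widehat{\Gamma}$ via $i$. First I would check that $\widetilde{S}\cup T$ normally generates $\widehat{\Gamma}$: given $g\in\widehat{\Gamma}$, one writes $\pi(g)$ as a product of at most $M$ conjugates of elements of $S$, lifts each conjugator arbitrarily into $\widehat{\Gamma}$, and forms the corresponding product $p\in\widehat{\Gamma}$; then $gp^{-1}\in\ker\pi=K$ is in turn a product of at most $N$ conjugates of elements of $T$.

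The quantitative version of this argument supplies the diameter bound. The factor $p$ contributes at most $M$ to the bi-invariant word length of $g$ with respect to $\widetilde{S}\cup T$. For the factor $gp^{-1}\in K$, the key observation is that $K$ is normal in $\widehat{\Gamma}$, so every $K$-conjugate of an element of $T$ is also a $\widehat{\Gamma}$-conjugate; hence the word expressing $gp^{-1}$ inside $K$ simultaneously expresses it as a product of at most $N$ $\widehat{\Gamma}$-conjugates of elements of $T$. Combining yields
$$
|g|_{\overline{\widetilde{S}\cup T}}\leq M+N
$$
for every $g\in\widehat{\Gamma}$, so $\widehat{\Gamma}$ is bounded. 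I expect the main obstacle to be purely notational: one must track carefully whether conjugacy closures are formed inside $K$ or inside $\widehat{\Gamma}$, and verify that lifting conjugators from $\Gamma$ does not inflate word lengths. The former point dissolves because $K$ is normal in $\widehat{\Gamma}$, and the latter because bi-invariance renders the word norm of a conjugate $s^h$ independent of the choice of $h$.
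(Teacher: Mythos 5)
Your proof is correct and follows essentially the same route as the paper: the quotient direction via the Lipschitz property of $\pi$ (i.e.\ Corollary~\ref{C:h1}), and the converse by lifting a bounded normal word from $\Gamma$ into $\widehat\Gamma$ and absorbing the discrepancy into the bounded kernel. If anything, your version is slightly more careful than the paper's, since you lift conjugates of generators directly rather than applying a section to the product, and you explicitly justify, via normality of $K$ in $\widehat\Gamma$, why the bound on $K$ with respect to its own bi-invariant metric transfers to a bound inside $\widehat\Gamma$.
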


\begin{proof}
If the extension is bounded then the quotient is bounded
due to the Lipschitz property of the quotient homomorphism
(see Section \ref{SS:lipschitz}).

Suppose that the bi-invariant word metric of the quotient is bounded
by $m$.  Let\/ $s\colon \Gamma \to \widehat \Gamma$ be a section such
that $s(1)=1$. Let $\widehat S$ be a generating set of 
$\widehat \Gamma$ containing the image $s(S)$ of the generating set of
the quotient. 
Let $\kappa:=\max\{|i(k)|_{\widehat \Gamma}\,|\, k\in K\}$.

Let $\hat g\in \widehat \Gamma$ be any element. Let
$\pi(\hat g)=g_1\ldots g_m$.
The following calculation yields the proof.

\begin{eqnarray*}
|\hat g|_{\widehat \Gamma}&=& |s(\pi(\hat g))i(k)|_{\widehat \Gamma}\\
&=& |s(g_1\ldots g_m)i(k)|_{\widehat \Gamma}\\
&=& |s(g_1)\ldots s(g_m)i(k_1)\ldots i(k_m)i(k)|_{\widehat \Gamma}\\
&\leq & m+|i(k_1\ldots k_mk)|_{\widehat \Gamma}\\
&\leq & m+\kappa.
\end{eqnarray*}
\end{proof}

\begin{proposition}\label{P:central}
Let\/ $\B Z\stackrel{i}\to \widehat \Gamma \stackrel{\pi}\to \Gamma$ 
be a central extension associated with the class 
$0\neq [c]\in H^2(\Gamma,\B Z)$. If 
the cocycle $c$ is bounded then the 
image $i(\B Z)\subset \widehat \Gamma$ is unbounded. 
Consequently, $\widehat \Gamma$ is unbounded.
\end{proposition}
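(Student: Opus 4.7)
The plan is to manufacture an unbounded quasi-homomorphism $q\colon \widehat{\Gamma}\to\B R$ out of the bounded cocycle $c$, whose restriction to $i(\B Z)$ is the identity; the conclusion then follows mechanically from Lemma~\ref{L:qm_S}.

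First I would fix a set-theoretic section $s\colon \Gamma\to\widehat{\Gamma}$ of $\pi$ with $s(1)=1$, so that every $\hat g\in\widehat{\Gamma}$ has a unique expression $\hat g = i(n(\hat g))\cdot s(\pi(\hat g))$ and the cocycle $c$ satisfies $s(a)s(b)=i(c(a,b))s(ab)$. Define $q\colon \widehat{\Gamma}\to\B Z\subset\B R$ by $q(\hat g):=n(\hat g)$. Using centrality of $i(\B Z)$ together with the cocycle identity, a short calculation yields
\[
q(\hat g\hat h)-q(\hat g)-q(\hat h)=c(\pi(\hat g),\pi(\hat h)),
\]
so the hypothesis that $c$ is bounded makes $q$ a quasi-homomorphism with defect at most $\|c\|_\infty$. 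By construction $q\circ i\colon \B Z\to\B Z$ is the identity, so $q$ is already unbounded on $i(\B Z)$.

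To convert this into a statement about the bi-invariant word metric, I would apply Lemma~\ref{L:qm_S} to $q$ and a finite normally generating set of $\widehat{\Gamma}$: it produces a constant $C>0$ with $|q(\hat g)|\leq C|\hat g|_{\widehat{\Gamma}}$ for every $\hat g$. Specialising to $\hat g=i(n)$ yields $|i(n)|_{\widehat{\Gamma}}\geq |n|/C$, so $i(\B Z)$ is unbounded, and consequently $\widehat{\Gamma}$ itself is unbounded.

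The main obstacle is really just the cocycle bookkeeping that verifies the quasi-homomorphism identity in the first step; once $q$ is in place, Lemma~\ref{L:qm_S} does all the remaining work. Two minor points are worth noting: the choice of section is immaterial, since a different section changes $q$ only by a bounded function; and the hypothesis $[c]\neq 0$ is not actually used by the argument (the split case reducing to a direct appeal to Corollary~\ref{C:section}), but it pinpoints the genuinely new content, namely that boundedness of a nontrivial class $[c]$ forces the central $\B Z$ to be undistorted in the bi-invariant word metric.
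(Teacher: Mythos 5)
Your argument is correct, and it reaches the same quantitative bound as the paper by a slightly different route. The paper argues by contradiction: it assumes every $i(k)$ is a product of at most $m$ elements of the form $s(g_j)$ (conjugates of the chosen generators), telescopes using $s(a)s(b)=i(c(a,b))s(ab)$, and concludes $|k|\leq (m-1)B$. You instead package exactly that telescoping into the single identity $q(\hat g\hat h)-q(\hat g)-q(\hat h)=c(\pi(\hat g),\pi(\hat h))$, observe that $q=n(\cdot)$ is therefore a quasi-homomorphism of defect at most $B=\sup|c|$ restricting to the identity on $i(\B Z)$, and then invoke Lemma~\ref{L:qm_S} (whose proof is itself the same telescoping estimate). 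The two proofs are thus the same computation in different clothing, but yours buys a few things: it is a direct lower bound rather than a contradiction; it makes explicit the conceptual content (a bounded $2$-cocycle lifts to a quasi-homomorphism on the extension detecting the centre, so the central $\B Z$ is in fact \emph{undistorted}, not merely unbounded); and it sidesteps a small imprecision in the paper's computation, where a general element of bounded $\overline{s(S)}$-length is written as a product of elements $s(g_j)$ even though a conjugate of $s(g)$ is only of the form $i(e)s(g')$ with $|e|$ bounded in terms of $B$ --- an adjustment of constants that your appeal to Lemma~\ref{L:qm_S} absorbs automatically. Two caveats: applying Lemma~\ref{L:qm_S} presupposes that $\widehat\Gamma$ is (normally) finitely generated, which is the paper's standing convention from Section~\ref{SS:convention} but worth flagging; and your closing remark is right that $[c]\neq 0$ is never used, though in the split case the conclusion is anyway immediate since the retraction $\widehat\Gamma\to\B Z$ is a homomorphism, hence Lipschitz.
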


\begin{proof}
Let $B$ be a constant such that $|c(g,h)|\leq B$ for every 
$g,h\in \Gamma$.  Let $s\colon \Gamma \to \widehat \Gamma$ be a
section such that $s(1)=1$.  Let $S$ be a set normally generating
$\Gamma$. Then its image $s(S)$ normally generate $\widehat \Gamma$.
We consider the word metrics with respect to these sets.

Suppose, on the contrary to the statement, that
$i(\B Z)\subset \widehat \Gamma$ is bounded. That is
there exists a constant $C$ such that
$|i(k)|_{\Gamma}\leq C$ for every $k\in \B Z$.
For any $k\in \B Z$ we have the following equalities.
\begin{eqnarray*}
i(k) &=& s(g_1)s(g_2)\ldots s(g_m)\\
&=& s(g_1g_2)s(g_3)\ldots s(g_m)c(g_1,g_2)\\
&=& s(g_1g_2\ldots g_m)c(g_1,g_2)c(g_1g_2,g_3)\ldots c(g_1g_2\ldots g_{m-1},g_m)\\
&=& c(g_1,g_2)c(g_1g_2,g_3)\ldots c(g_1g_2,g_{m-1},g_m)\\
\end{eqnarray*}
It follows that $|k| \leq (m-1)B \leq CB$ which is a contradiction for
$k\in \B Z$ was chosen to be arbitrary.
\end{proof}

\begin{example}\label{E:central}
Let $\Gamma =\OP{Sp}(2n,\B Z)\subset \OP{Sp}(2n;\B R)$ be a lattice
and let $\widehat \Gamma$ be the central extension that is the pullback
of the universal cover $\widetilde {\OP{Sp}}(2n;\B R)\to \OP{Sp}(2n;\B R)$
with respect to the inclusion of the lattice.

It is known that this extension is associated with a bounded
cohomology class and hence $\widehat {\OP{Sp}}(2n;\B Z)$ is unbounded,
due to Proposition \ref{P:central}.  On the other hand, the quotient
$\OP{Sp}(2n;\B Z)$ is a Chevalley group and, according to Theorem
\ref{T:main}, it is bounded.
\hfill $\diamondsuit$
\end{example}

\section{Bi-invariant word metrics on Chevalley groups}\label{S:bounded}

\subsection{Chevalley groups}\label{S:chevalley}
Let $\pi\colon \F g\to \F {gl}(V)$ be a representation
of a complex semisimple Lie algebra and let $\C O$ be a commutative
ring with unit. Let $\Phi$ denotes the root system associated with
a Cartan subalgebra $\F h\subset \F g$.
With these data there are associated two groups
$$
\OP{E}_{\pi}(\Phi,\C O)\subset \OP{G}_{\pi}(\Phi,\C O)
$$ called the {\bf elementary Chevalley group} and the 
{\bf Chevalley group} respectively. If $\C O$ is a field then these
groups coincide and are well understood
\cite{MR0258840,MR0407163,MR0466335}.  The situation over rings is
much more delicate \cite{MR1409655}. Let us define the groups.

The elementary Chevalley group $\OP{E}_{\pi}(\Phi,\C O)$ is defined as
the subgroup of the automorphism group 
$\OP{Aut}(V_{\B Z}\otimes_{\B Z}\C O)$ generated by elements of the
form
$$
x_{\alpha}(t):=\exp(t \pi(x_{\alpha}))
$$ 
where $\alpha \in \Phi$ is a root and $t\in \C O$.
Here, $V_{\B Z}$ is an admissible $\B Z$-form of $V$,
that is, an integral lattice preserved by the
representation, see Borel \cite{MR0258840} for details.

Let $G\subset \OP{G}(n,\B C)$ be a complex Lie group corresponding to
the Lie algebra $\F g$, where the identification
$\OP{GL}(V)\cong \OP{GL}(n,\B C)$ is done via the basis of 
$V_{\B Z}$. This basis defines coordinate functions on 
$\OP{GL}(n,\B C)$ restrictions of which generate a Hopf subalgebra 
$\B Z[G]\subset \C [G]$ in the coordinate ring for $G$.  The Chevalley
group is defined to be an affine group scheme over the integers
$$
\OP{G}_{\pi}(\Phi,\C O):= \OP{Hom}(\B Z[G],\C O).
$$

\begin{remark}
Both definitions above depend on the choice of the admissible
$\B Z$-form $V_{\B Z}$. This choice is not mentioned in our
abused notation.
\end{remark}

It is not difficult to see that there is an inclusion
$\OP{E}_{\pi}(\Phi,\C O)\subset \OP{G}_{\pi}(\Phi,\C O)$.
Moreover, if $G$ is of rank at least two and 
$\C O =\C O_{\sf V}$ is the ring of $\sf V$-integers
in a number field then the two groups coincide
(see Tavgen' \cite[Lemma 4]{MR1044049}).

\begin{lemma}\cite[Theorem 5.2.2]{MR0407163}\label{L:chevalley}
The generators $x_{\alpha}(t)\in \OP{E}_{\pi}(\Phi,\C O)$ satisfy the
following commutation relations, 
$$
[x_{\alpha}(k),x_{\beta}(l)]
=\prod_{i,j}x_{i\alpha+j\beta}(C(-l)^ik^j)
$$
where $i,j$ are positive integers such that $i\alpha + j\beta$
is a root and $C$ is an integer such that $|C|\leq 3$. The
product is taken in the order of increasing $i+j$. \qed
\end{lemma}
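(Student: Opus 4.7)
The plan is to reduce the identity to a formal computation in the universal enveloping algebra and then push it forward through $\pi$. Setting $X=k\pi(x_{\alpha})$ and $Y=l\pi(x_{\beta})$, nilpotency of $\pi(x_{\alpha})$ and $\pi(x_{\beta})$ on $V$ makes $\exp(X)$ and $\exp(Y)$ polynomial endomorphisms, so $\exp(X)\exp(Y)\exp(-X)\exp(-Y)$ is a finite sum of monomials in $X,Y$ acting on $V_{\B Z}\otimes_{\B Z}\C O$. The claim is that this sum refactors as a product of one-parameter unipotents $x_{\gamma}$ along the other roots $\gamma=i\alpha+j\beta$ of the rank-two subsystem generated by $\alpha$ and $\beta$.

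Concretely, I would proceed as follows. First, expand the four exponentials and collect the terms by their weight under the Cartan $\F h$: the monomial $X^aY^bX^cY^d$ has weight $(a+c)\alpha+(b+d)\beta$, and within the Lie subalgebra of $\F g$ generated by $x_{\alpha},x_{\beta}$ the weight space of $i\alpha+j\beta$ is one-dimensional (spanned by $x_{i\alpha+j\beta}$) exactly when $i\alpha+j\beta$ is a root. Second, order the roots $i\alpha+j\beta$ with $i,j>0$ by increasing $i+j$ and, by induction on this order, regroup the expansion as a product $\prod_{i,j}\exp(P_{ij}(k,l)\,\pi(x_{i\alpha+j\beta}))$ with $P_{ij}\in\C O[k,l]$. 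Third, show that each $P_{ij}$ reduces to a signed monomial by bi-homogeneity: conjugating both sides by a torus element $h_{\alpha}(\lambda)h_{\beta}(\mu)$ rescales $k$ and $l$ independently, and matching the resulting $(\lambda,\mu)$-weight on each factor forces a single monomial per factor, with coefficient an integer $C_{ij}$ depending only on $\Phi$. Fourth, bound $|C_{ij}|\le 3$ by inspecting the finite list of rank-two root subsystems $A_1\times A_1$, $A_2$, $B_2$, and $G_2$; the bound reflects that the Chevalley structure constants satisfy $N_{\alpha,\beta}=\pm(p+1)$ with $p\le 3$, equality occurring only in $G_2$.

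The main obstacle is making the regrouping in the second step rigorous over an arbitrary ring $\C O$: the naive expansion carries denominators $a!\,b!\,c!\,d!$ from the exponential series, and one must show that the collected coefficients $P_{ij}(k,l)$ land in $\C O[k,l]$ rather than $\B Q[k,l]$. This is precisely the content of Kostant's integrality theorem, which says that the divided powers $\pi(x_{\alpha})^n/n!$ preserve the admissible lattice $V_{\B Z}$. A cleaner implementation is to carry out the whole manipulation in Kostant's $\B Z$-form of $U(\F g)$, where the formula becomes a universal identity with integer coefficients, and then specialise via $\pi$ and evaluation at $k,l\in\C O$. The precise signs of the $C_{ij}$ depend on the chosen Chevalley basis, but the bound $|C|\le 3$ is basis-independent and can be read off from the rank-two classification once the universal identity has been established.
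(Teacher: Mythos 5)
The paper offers no proof of this lemma; it is quoted directly from Carter's book (the cited Theorem 5.2.2), so there is nothing in-paper to compare your argument against. Your sketch reproduces the standard proof from that source --- expansion in Kostant's $\B Z$-form to keep coefficients integral, unique factorization of the unipotent group attached to the closed root set $\{i\alpha+j\beta\}_{i,j>0}$ built up by induction on $i+j$, bihomogeneity via torus conjugation to force each coefficient to be a single monomial $C_{ij}k^i l^j$ up to sign, and a rank-two case check for the bound --- and is correct in outline; the one slip is the claim that $N_{\alpha,\beta}=\pm(p+1)$ with $p\le 3$, which should read $p\le 2$: since $\alpha+\beta$ is a root the $\alpha$-string through $\beta$ has $q\ge 1$, and $p+q\le 3$ then gives $p\le 2$ and hence $|N_{\alpha,\beta}|\le 3$ as required.
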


\subsection{Proof of Theorem \ref{T:main}}
We shall prove the statement for the elementary Chevalley group.
Since it is of finite index in the Chevalley group the result
follows for the latter and for a general $\Gamma$ as in the
statement of theorem as well.

We shall show that there exists a positive number $m\in \B R$ such
that for every element $g\in \OP{E}_{\pi}(\Phi,\C O_{\sf V})$ and every
$n\in \B Z$ we have $|g^n|\leq~m$.  The first step is to prove this
claim for an element of the form 
$x_{\alpha}(r)\in \OP{E}_{\pi}(\Phi,\C O_{\sf V})$, where 
$\alpha \in \Phi$ is a root and $r\in \C O_{\sf V}$.

There exist a subsystem $\Psi\subset \Phi$ of rank two
isomorphic to either $\OP{\bf A}_2$ or $\OP{\bf B}_2$ and containing $\alpha$.
Indeed, there is a subsystem of rank two containing $\alpha$ and
it follows from the simplicity and the higher rank that
this system has to be simple of rank two, that is one
of $\OP{\bf A}_2,\OP{\bf B}_2,\OP{\bf G}_2$. It is then easy to see
that each root in $\OP{\bf G}_2$ is contained in some $\OP{\bf A}_2$.
 
Observe that there exists $\beta,\gamma \in \Psi$ such that 
$\alpha = \beta+\gamma$ and no other positive combination
of $\beta$ and $\gamma$ is a root. 
It follows from Lemma \ref{L:chevalley} that
$$
x_{\alpha}(-Ckr)=[x_{\beta}(kr),x_{\gamma}(1)]
$$
where $k\in \B Z$ and $C=\pm 1,\pm 2$ or $\pm 3$. 
Thus we obtain that
$$
|x_{\alpha}(Cr)^k| =|x_{\alpha}(r)^{Ck}| \leq 2
$$
which implies that the cyclic subgroup
generated by $x_{\alpha}(r)$ stays within a bounded
distance from the identity.

It is a result of Tavgen' \cite{MR1044049} that the group
$\OP{E}_{\pi}(\Phi,\C O_{\sf V})$ has bounded generation with respect
to the set of elements of the form $x_{\alpha}(r)$. That is, there is
a constant $B\in \B N$ such that every 
$g\in \OP{E}_{\pi}(\Phi,\C O_{\sf V})$ is a product of at most $B$
elements of the form $x_{\alpha}(r)$.

Let $\rho_1,\ldots,\rho_n\in \C O_{\sf V}$ be elements such that there
is an isomorphism 
$\C O_{\sf V}\cong \B Z\rho_1\oplus \ldots \oplus \B Z\rho_n$ 
of abelian groups. Let $S:=\{x_{\alpha}(\rho_i)\}$ be a set of
generators of $G_{\pi}(\Phi, \C O_{\sf V})$ and let $\mu$ be a number such that
$$
|x_{\alpha}(\rho_i)^k|_{\overline S}\leq \mu
$$ 
for all $1\leq i\leq n$, $\alpha \in \Phi$  and $k\in\B Z$.

Putting the two results together we obtain the following estimate
for every $g\in \OP{E}_{\pi}(\Phi,\C O_{\sf V})$.
\begin{eqnarray*}
|g|_{\overline S} 
&=& |x_{\alpha_1}(r_{1})^{k_1}\ldots x_{\alpha_l}(r_{l})^{k_l}|_{\overline S}\\
&=& |x_{\alpha_1}(\rho_{1})^{k_{11}}\ldots x_{\alpha_1}(\rho_{n})^{k_{1n}}\ldots
x_{\alpha_l}(\rho_{1})^{k_{l1}}\ldots x_{\alpha_l}(\rho_{n})^{k_{ln}}|_{\overline S}\\
&=& |x_{\alpha_1}(\rho_{1})^{k_{11}}|_{\overline S}+
\ldots +|x_{\alpha_l}(\rho_{n})^{k_{ln}}|_{\overline S}\\
&\leq & \mu ln \leq \mu Bn.
\end{eqnarray*}
\qed

\begin{corollary}\label{C:other}
Let $\Gamma$ be a group as in Theorem \ref{T:main}. Then
\begin{enumerate}
\item
if\/ $\Gamma$ is perfect then 
the commutator length is bounded on $\Gamma$ and
hence its stable commutator length is zero;
\item
if\/ $\Gamma$ is generated by torsion elements then
the torsion length is bounded;
\item
every quasi-homomorphism $q\colon \Gamma\to \B R$ 
is bounded.\qed
\end{enumerate}
\end{corollary}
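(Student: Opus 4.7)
The plan is to combine Theorem~\ref{T:main}, which provides a uniform bound $M$ such that $|g|_{\overline S}\le M$ for all $g\in\Gamma$ (with $S$ a fixed finite normally generating set), with the Lipschitz-type estimates already developed in Section~\ref{S:below}. In each of the three cases, the quantity to be controlled is itself (or is bounded by) a bi-invariant norm on $\Gamma$, so the maximality of the bi-invariant word metric does all the work.

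For~(1), I would observe that on a perfect $\Gamma$ the commutator length is a bi-invariant norm, and apply Lemma~\ref{L:cl_S} to obtain a constant $\nu>0$ with $\OP{cl}(g)\le\nu|g|_{\overline S}\le\nu M$ for every $g\in\Gamma$. Hence $\OP{cl}$ is uniformly bounded on $\Gamma$. Since this bound does not depend on $g$, it also applies to $g^n$, so $\OP{scl}(g)=\lim_{n\to\infty}\OP{cl}(g^n)/n\le\lim_{n\to\infty}\nu M/n=0$, giving vanishing stable commutator length.

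For~(2), the torsion length is a bi-invariant norm on a group generated by torsion elements (the set of torsion elements is conjugation-invariant). Each of the finitely many generators $s\in S$ lies in the subgroup generated by torsion elements, so has finite torsion length $\mu_s$; setting $\mu:=\max_{s\in S}\mu_s$ and running the same argument as in the proof of Lemma~\ref{L:lipschitz} gives $\OP{tl}(g)\le\mu|g|_{\overline S}\le\mu M$. For~(3), I would simply invoke Lemma~\ref{L:qm_S}, which yields a constant $C>0$ with $|q(g)|\le C|g|_{\overline S}\le CM$, so any quasi-homomorphism on $\Gamma$ is bounded.

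There is no real obstacle here: once Theorem~\ref{T:main} is in hand, all three statements are immediate applications of the general principle, already established in the paper, that any conjugation-equivariant subadditive (or quasi-subadditive) function on $\Gamma$ is Lipschitz-dominated by the bi-invariant word norm. The only mild point to check carefully is~(2), namely that the torsion length of an arbitrary element of the fixed finite generating set $S$ is indeed finite, which uses the hypothesis that $\Gamma$ is generated by torsion elements.
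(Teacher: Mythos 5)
Your proposal is correct and follows exactly the route the paper intends: the corollary is stated with a \qed precisely because each item is an immediate combination of Theorem~\ref{T:main} with the Lipschitz dominance of the relevant bi-invariant norm by the bi-invariant word norm (Lemma~\ref{L:cl_S} for the commutator length, the argument of Lemma~\ref{L:lipschitz} for the torsion length, and Lemma~\ref{L:qm_S} for quasi-homomorphisms). Your added care in checking that each generator has finite torsion length in item~(2) is the right detail to verify, and your deduction of vanishing stable commutator length from a uniform bound on $\OP{cl}$ is exactly as intended.
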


\subsection{Properties and examples of Chevalley groups}\label{SS:properties}
Suppose that $G$ is an algebraic group defined over $\C O$.
It is clear from the definition of (elementary) Chevalley group
that we have inclusions
$$
\OP{E}_{\pi}(\Phi,\C O_{\sf V})\subset \OP{G}_{\pi}(\Phi,\C O_{\sf V})
\subset \OP{G}(\C O).
$$

In what follows we list various examples of bounded groups.
Notice that each example provides more groups by taking
finite extensions, quotients  and finite index supergroups.

\begin{example}\label{E:A_nZ}
The special linear group $\OP{SL}(n,\B Z)$ for $n\geq 3$ is bounded
because it is the Chevalley group $\OP{G}({\bf A}_{n-1},\B Z)$.  It is
a non-uniform lattice in $\OP{SL}(n,\B Z)$.
\hfill $\diamondsuit$
\end{example}

\begin{example}\label{E:A_nZ2}
The special linear group $\OP{SL}(n,\B Z[\sqrt{2}])$ for $n\geq 3$ is
bounded because it is the Chevalley group 
$\OP{G}({\bf A}_{n-1},\B Z[\sqrt{2}])$.  It is a non-uniform lattice in
$\OP{SL}(n,\B Z)\times \OP{SL}(n,\B Z)$.
\hfill $\diamondsuit$
\end{example}

\begin{example}\label{E:A_nZi}
The special linear group $\OP{SL}(n,\B Z[i])$ for $n\geq 3$ is bounded
because it is the Chevalley group $\OP{G}({\bf A}_{n-1},\B Z[i])$.  It is
a non-uniform lattice in $\OP{SL}(n,\B C)$.
\hfill $\diamondsuit$
\end{example}

\begin{example}\label{E:B_nZ}
Let $B$ be a quadratic form represented by the matrix with ones on the
antidiagonal and zeros elsewhere.  The associated orthogonal group
$\OP{SO}(n,n+1,\B Z)$ for $n\geq 2$ is bounded because it is the
Chevalley group $\OP{G}({\bf B}_{n},\B Z)$.  It is a non-uniform
lattice in $\OP{SO}(n,n+1)$. A similar example exists for the
root system $\OP{\bf D}_n$.
\hfill $\diamondsuit$
\end{example}

\begin{example}\label{E:C_nZ}
For the root system $\OP{\bf C}_n$ we obtain that 
the Chevalley group $\OP{G}({\bf C}_n,\B Z)$
is equal to $\OP{Sp}(2n;\B Z)$ and it is a non-uniform lattice
in the split real form $\OP{Sp}(2n;\B R)$.
\hfill $\diamondsuit$
\end{example}

\section{Hamiltonian representations}\label{S:hamiltonian}

\subsection{The Hofer metric \cite{MR96g:58001,MR2002g:53157}}\label{SS:hofer}
Let $(M,\omega)$ be a symplectic manifold. That is, $M$ is a smooth
manifold and $\omega$ is a closed and non-degenerate two-form.

Let $H\colon M\times \B R \to \B R$ be a smooth function.
It follows from he non-degeneracy of the symplectic form
that the time-dependent vector field satisfying the
identity
$$
\iota _{X_t}\omega = dH(\,\,,t)
$$
is well defined. Moreover, according to the closeness
of the symplectic form the flow $f_t$ of this vector
field preserves the symplectic form, $f^*_t\omega = \omega$.

A diffeomorphism obtained this way is called {\bf Hamiltonian} and the
group of Hamiltonian diffeomorphisms is denoted by $\Ham(M,\omega)$.

Let $f\in \Ham(M,\omega)$ be a compactly supported
Hamiltonian diffeomorphism generated by a function $H$.
The following formula
$$
\|f\|:= \inf_{H} \int_0^1 \OP{osc} H(\,\,,t) dt
$$
defines a bi-invariant norm called the {\bf Hofer norm}. The induced
bi-invariant metric is also called the {\bf Hofer metric}.
It is known to be unbounded in many cases
\cite{MR1979584,MR2017719}.

\subsection{Ad hoc examples of Hamiltonian representations}
\label{SS:ham_rep}

\begin{example}
Let $\Delta$ be a graph and let $\Gamma_{\Delta}$ be
the right-angled Artin group associated with $\Delta$.
That is, $\Gamma_{\Delta}$ is generated by the vertices
$v_1,\ldots,v_m$ of $\Delta$ modulo the following 
commutation relations $[v_i,v_j]=~\OP{1}$
if and only if $\{v_i,v_j\}$ is not an edge of $\Delta$.

Let $\{U_i\}\subset M$ be a family of open subset
such that their incidence graph is isomorphic to $\Delta$.
Let $f_i\in \Ham(M,\omega)$ be a diffeomorphism supported in
$U_i$. The map $\Gamma_{\Delta}\to \Ham(M,\omega)$ defined
by $v_i\mapsto f_i$ is clearly a homomorphism.

Such representations provide sometimes less obvious representations
due to the fact that right-angled Artin groups contain many
interesting subgroups \cite{MR2002k:20074}. The injectivity of certain
Hamiltonian actions of right-angled Artin groups on two-dimensional
sphere has been proven by M.Kapovich \cite{kapovich}.
\hfill $\diamondsuit$
\end{example}

\begin{example}
Suppose that $\Ham(M,\omega)$ contains two tori $T_1$ and $T_2$ with nonempty
intersection $T_1\cap T_2\neq \emptyset$. By choosing a finite number
of generators $f_1,\ldots,f_n\in T_1\cup T_2$ we obtain a
representation
$$
A\star _B C\to \Ham(M,\omega)
$$ 
of a finitely presented nontrivial amalgamated product of two
Abe\-lian groups.  For example, if $(M,\omega)$ is a Hirzebruch surface then,
in general, $\Ham(M,\omega)$ contains two dimensional tori satisfying the
above assumption \cite{MR1960129}. This way we obtain examples of
Hamiltonian actions of finitely presented groups which do not extend
to an action of a compact group.  \hfill $\diamondsuit$
\end{example}

Notice that in the above examples the groups are unbounded with
respect to the bi-invariant word metric (with few obvious exceptions).
Recall that Corollary \ref{C:main} states that the image of a
Hamiltonian action $\varphi\colon \Gamma\to \Ham(M,\omega)$, where $\Gamma$
is either a finite extension or a supergroup of finite index of a
Chevalley group is bounded.  The only examples of bounded subgroups of
the group of Hamiltonian diffeomorphisms of a closed symplectic
manifold we know are subgroups of compact Lie groups.

\begin{example}\label{E:nontrivial_action}
Let $H\subset \Ham(M,\omega)$ be the inclusion of a connected Lie group.
According to a theorem of Delzant \cite{delzant}, if $H$ is semisimple
then it is compact. Let $G$ be a non-compact connected semisimple Lie
group and let $\Gamma\subset G\times H$ be an irreducible
lattice. For example, let $J_{p.q}$ be the diagonal 
matrix with the first $p$ entries equal to one and the
last $q$ entries equal to $-\sqrt{2}$, where $p\geq q>1$.
Let 
$$
\Gamma:=\{A\in \OP{SL}(p+q,\B Z[\sqrt{2}])\,|\, AJA^T=J\}
$$
It is known that $\Gamma$ is a cocompact irreducible lattice in
$\OP{SO}(p,q)\times \OP{SO}(p+q)$.  Taking the composition of the
inclusion and the projection onto the second factor we obtain and a
highly nontrivial Hamiltonian action of $\Gamma$ on coadjoint orbits
of $\OP{SO}(2n)$. Notice, however that $\Gamma$ is not a Chevalley
group.
\hfill $\diamondsuit$
\end{example}

In the noncompact case the situation is different.

\begin{example}\label{E:sikorav}
Let $\B D(r)\subset \B R^{2n}$ be an open $2n$- dimensional disc
of radius $r$ centred at the origin.
It induces an inclusion
$$
\Ham(\B D(r),\omega_0)\subset \Ham(\B R^{2n},\omega_0).
$$
The Hofer diameter of $\Ham(\B D(r),\omega_0)$ is infinite since it
can be estimated from below by the absolute value of the
Calabi homomorphism.

On the other hand, the above inclusion is highly distorted.  It is a
result of Sikorav (Theorem 5 in Chapter 5 of Hofer-Zehnder
\cite{MR96g:58001}) that $\Ham(\B D(r),\omega_0)$ is within a bounded
distance from the identity with respect to the Hofer metric on
$\Ham(\B R^{2n})$.
\hfill $\diamondsuit$
\end{example}

\subsection{Other restrictions on actions of lattices}
\begin{example}\label{E:polterovich}
Let $\Gamma $ be a irreducible non-uniform lattice in a semisimple Lie
group of higher rank.  It is a result of Polterovich
\cite{MR2003i:53126} that if $(M,\omega)$ is a closed {\em symplectically
  hyperbolic} manifold then there is no non-trivial homomorphism
$\Gamma\to \Ham(M,\omega)$. It would be interesting to know if there
are nontrivial bounded subgroups in $\Ham(M,\omega)$ where $(M,\omega)$
is symplectically hyperbolic.
\hfill $\diamondsuit$
\end{example}

Our final comment is concerned with actions supported on a proper
subset of a manifold. In such cases there are severe restrictions
coming from Thurston Stability as presented by Franks in
\cite{MR2288284}. It implies that there is no chance for embedding a
lattice $\Gamma \subset G$ in a semisimple Lie group of higher rank
into a compactly supported diffeomorphisms of a non-compact manifold.
\begin{proposition}\label{P:thurston}
Let\/ $\Gamma $ be a finitely generated group.  
Let\/ $\Gamma \to \Diff(M)$ be a smooth effective action with support
strictly smaller than $M$. Then either $\Gamma $ is trivial or
it admits a nontrivial homomorphism $\Gamma \to \B R$.
\qed
\end{proposition}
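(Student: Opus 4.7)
The plan is to apply Thurston's stability theorem at a boundary point of the support of the action, exactly in the form employed by Franks \cite{MR2288284}. Assume $\Gamma$ is not trivial; by effectiveness the action is nontrivial, so the support $S$, defined as the closure of $\bigcup_{f\in\Gamma}\OP{supp}(f)$, is a nonempty closed subset of $M$, and is properly contained in $M$ by hypothesis. In particular its topological boundary $\partial S$ is nonempty, and we fix any $p\in\partial S$.

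The first step is to verify that every $f\in\Gamma$ fixes $p$ with trivial first-order behaviour: $f(p)=p$ and $Df_p=\OP{Id}$. Indeed $p$ lies in the closure of the open set $M\setminus S$, on which every $f\in\Gamma$ restricts to the identity; picking any sequence $x_n\in M\setminus S$ with $x_n\to p$, continuity of $f$ and of $Df$ forces $f(p)=\lim f(x_n)=\lim x_n=p$ and $Df_p=\lim Df_{x_n}=\OP{Id}$. Consequently the action descends to a homomorphism from the finitely generated group $\Gamma$ into the group of germs at $p$ of $C^1$-diffeomorphisms tangent to the identity.

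The second step is to invoke Thurston's stability theorem in this germ-theoretic form: for such a representation there either exists a nontrivial homomorphism $\Gamma\to\B R$, or the action is trivial on some open neighborhood $V$ of $p$. The latter alternative contradicts our choice of $p$: if every $f\in\Gamma$ were the identity on $V$ then $\OP{supp}(f)\subset M\setminus V$ for each $f$, whence $S\subset M\setminus V$ and $p\notin S$, against $p\in\partial S\subset S$. So the former alternative holds and we obtain the desired nontrivial homomorphism $\Gamma\to\B R$.

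The main obstacle is conceptual rather than technical: one has to cite Thurston's stability theorem in exactly the right shape (finitely generated group of $C^1$ germs fixing a point with identity linear part), which is what makes finite generation of $\Gamma$ and $C^1$ regularity of the action indispensable. Once that tool is granted, the only ingredients we supply are the identification of a good base point $p\in\partial S$ and the elementary boundary argument that rules out the trivial-germ alternative.
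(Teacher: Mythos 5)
Your proof is correct and is exactly the argument the paper has in mind: the proposition is stated there without proof, attributed to Thurston stability as presented by Franks, and your boundary-point argument (fix $p\in\partial S$, note every element has trivial $1$-jet at $p$, apply Thurston's theorem to the germ group, and rule out the trivial-germ alternative by the choice of $p$) is the standard way that deduction goes. The only thing worth flagging is the implicit assumption that $M$ is connected, which you use when asserting $\partial S\neq\emptyset$ and which is genuinely needed, since the statement fails for disconnected $M$ (e.g.\ a nontrivial finite group acting on one component and trivially on another).
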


\subsection*{Acknowledgements}
The authors would like to thank Dave Benson, Meinolf Geck, Tadeusz
Januszkiewicz, Colin Maclachlan, Nicolas Monod, Leonid Polterovich,
Geoff Robinson and Alain Valette for helpful discussions.
The authors thank Kamil Duszenko, Denis Osin, and Yehuda Shalom for pointing
out mistakes in an earlier version of the paper.

\'S.Gal is partially supported by Polish MNiSW grant N N201 541738, Swiss NSF
Sinergia Grant CRSI22-130435, and by the European Research Council (ERC) grant
of Goulnara Arzhantseva (grant agreement 259527)

\bibliography{../../bib/bibliography}
\bibliographystyle{acm}
\end{document}